\numberwithin{equation}{section}
\newtheorem{thm}{Theorem}[section]
\newtheorem{lma}[thm]{Lemma}
\renewcommand{\le}{\leqslant}
\renewcommand{\geq}{\geqslant}
\renewcommand{\leq}{\leqslant}
\def\eps{\varepsilon}
\def\phi{\varphi}
\def\D{{\mathcal D}}
\def\b{C}
\begin{document}

\title{Quantifying inhomogeneity in fractal sets}

\author[J. M. Fraser]{Jonathan M. Fraser}
\address{Jonathan M. Fraser\\ 
School of Mathematics\\ The University of Manchester\\ Manchester\\ M13 9PL\\ UK \\} \email{\href{mailto:jonathan.fraser@manchester.ac.uk}{jonathan.fraser@manchester.ac.uk} }
\urladdr{\url{http://personalpages.manchester.ac.uk/staff/jonathan.fraser/}}

\author[M. Todd]{Mike Todd}
\address{Mike Todd\\ Mathematical Institute\\
University of St Andrews\\
North Haugh\\
St Andrews\\
KY16 9SS\\
Scotland \\} 
\email{\href{mailto:m.todd@st-andrews.ac.uk}{m.todd@st-andrews.ac.uk}}
\urladdr{\url{http://www.mcs.st-and.ac.uk/~miket/}}

\date{\today}

      \maketitle

\begin{abstract}
An inhomogeneous fractal set is one which exhibits different scaling behaviour at different points.   
The Assouad dimension of a set is a quantity which finds the `most difficult location and scale' at which to cover the set and its difference from box dimension can be thought of as a first-level overall measure of how inhomogeneous the set is.  For the next level of analysis, we develop a quantitative theory of inhomogeneity by considering the measure of the set of points around which the set exhibits a given level of inhomogeneity at a certain scale.  For a set  of examples, a family of $(\times m, \times n)$-invariant subsets of the 2-torus, we show that this quantity satisfies a Large Deviations Principle. We compare members of this family, demonstrating how the rate function gives us a deeper understanding of their inhomogeneity.
\\ \\
\emph{Mathematics Subject Classification}  37C45, 60F10 (primary); 60F05, 28A80 (secondary).\\ 
\emph{Key words and phrases}: large deviations, Assouad dimension, box dimension, self-affine carpet.
\end{abstract}

\section{Introduction}
If a set is self-similar then it scales in the same way in all directions however far we zoom in, and wherever in the set we zoom in.  In this case notions of Hausdorff dimension, box dimension etc give good characterisations of local and global scaling behaviour.  If, on the other hand, there are different amounts of scaling in different directions, and this difference varies over the set, then this inhomogeneity is not really seen by the standard dimensions.  This phenomenon is present in a number of interesting classes of sets which are currently attracting a great deal of attention including: self-affine sets, non-conformal repellers, parabolic repellers such as parabolic Julia sets, and some random constructions.  For such sets, Assouad dimension, and in particular its difference from box dimension, gives some idea of this inhomogeneity.  However, this value only gives a very superficial measure of inhomogeneity and so, in order to obtain a detailed understanding of the scaling properties, and their inhomogeneity across the set, we perform the first quantitative study of Assouad dimension.  We will focus on a classic family of examples: certain compact $(\times m, \times n)$-invariant subsets of the 2-torus.

\subsection{Quasi-Assouad dimension}
Let $X$ be a compact metric space, which we will later assume to be the 2-torus, identified with $[0,1]^2$.  We wish to study the dimension of fractal subsets $F$ of $X$ via a family of observables based on location and scale.  More precisely, given a location $x \in  F$ and a scale $R>0$, we wish to describe how `large' $F$ is in a small set $\b(x, R)$ around $x$, where  $\b(x, R)\to \{x\}$ as $R\to 0$.  In standard cases $\b(x, R)$ would be taken to be an $R$-ball, but often, as here, it can be convenient/appropriate to take another type of set,  like a generalised cube for instance.  To this end, for $r>0$, let $N_r(E)$ be the smallest number of sets with diameter no greater than $r$ required to cover $E$, where a collection of sets $\{U_i\}_i$ is a cover of $E$ if $E \subseteq \cup_i U_i$.  We can now describe the size of $\b(x,R) \cap F$ by computing $N_r\big(\b(x,R) \cap F)$ for $r \in (0,R)$ and then taking the extremal exponential growth rate of this quantity as $R/r \to \infty$.  Fix $\varepsilon>0$ and define  the \emph{$\eps$-local Assouad dimension at location $x$ at scale $R$} by
\[
A_F^\varepsilon (x,R) \ :=\  \sup_{0< r \leq R^{1+\varepsilon}}\  \frac{\log N_{r} \Big( \b(x, R) \cap F \Big)}{\log R - \log r}.
 \]
The reason for the inclusion of $\varepsilon$ in the above definition is to avoid the supremum taking undesirably high values for $r \approx R$.  Our interpretation of $A_F^\varepsilon (x,R)$ is the `best guess' for the dimension of $F$ at location $x$ and scale $R$.

The above definition is motivated by the Assouad dimension; a commonly studied and important notion of dimension used to study fractals. Indeed, the Assouad dimension was introduced in the 1970s by Patrice Assouad during his doctoral studies, motivated by questions involving embeddability of metric spaces into Euclidean space \cite{assouadphd, assouad}.  Unsurprisingly, the Assouad dimension highlights the \emph{extreme} dimensional properties of the set in question and, roughly speaking, returns an exponent representing the \emph{most difficult} location and scale at which to cover the set.   The \emph{Assouad dimension} of a non-empty subset $F$ of $X$, $\dim_\text{A} F$, is defined by
\[
\dim_\text{A} F \ = \  \inf \left\{ \alpha \  : \   (\exists C>0) \,  (\forall 0<r<R\leq 1) \,  (\forall x \in F) \  N_r\big( \b(x,R) \cap F \big) \ \leq \ C (R/r)^\alpha \right\}.
\]
Note that whether one uses boxes or balls (or any other reasonable family of $R$-scale objects) to define $C(x,R)$ does not alter the definition. For more information on the Assouad dimension, including basic properties, we refer the reader to \cite{luk, robinson, fraser_assouad}. A related notion is the \emph{upper box dimension}, which can be defined by
\[
\overline{\dim}_\text{B} F \ = \  \inf \left\{ \alpha \  : \   (\exists C>0) \,  (\forall 0<r\leq 1) \  N_r\big(  F \big) \ \leq \ C (1/r)^\alpha \right\}.
\]
This is clearly related to the Assouad dimension, but the key difference is that it is not sensitive to location and gives an overall average dimension rather than the supremum of a localised quantity.  It is always true that $\overline{\dim}_\text{B} F \leq \dim_\text{A} F$, with equality occurring if the set is sufficiently homogeneous: at all scales and locations the set is no more difficult to cover than it is on average over the whole set.  For the sets we consider in this paper the upper box dimension will be equal to the related lower box dimension, in which case we just write $\dim_\text{B} F$ for the common value. 

It is easy to see that
\begin{equation} \label{quasiassouad}
\lim_{\varepsilon \to 0} \, \limsup_{R \to 0} \, \sup_{x \in F} A_F^\varepsilon (x,R) \ \leq \ \dim_\text{A} F,
\end{equation}
but in most cases, including the sets we consider in this paper, one has equality and even
\[
\limsup_{R \to 0} \, \sup_{x \in F} A_F^\varepsilon (x,R) \ = \ \dim_\text{A} F
\]
for sufficiently small $\varepsilon$. In fact the quantity on the left hand side of (\ref{quasiassouad}) defines the quasi-Assouad dimension, introduced in \cite{quasiassouad}.

\subsection{Large Deviations}
Since (quasi-)Assouad dimension is determined by the largest $A_F^\varepsilon (x,R)$, the $\eps$-local Assouad dimension at scale $R$, can be over \emph{all} $x$, it is a natural question to know how much this quantity varies as $x\in F$ varies.  So to consider this question quantitatively, we consider, for an appropriate Borel probability measure $\mathbb{P}$ on $F$ and $\lambda >0$ (or in a suitable range), 
\begin{equation} \label{measureestimate666}
\mathbb{P} \ \Big( \big\{ x \in F \  : \ A_F^\varepsilon (x,R)>\lambda \big\} \Big).
\end{equation}
If our set is homogeneous, then $A_F^\varepsilon (x,R)$ is essentially independent of $x$ (the Assouad and box dimensions coincide), so the quantity in \eqref{measureestimate666} is interesting when there is some inhomogeneity.   A, by now classic, and elementary, example of an inhomogeneous limit set is provided by Bedford-McMullen carpets, and for this first quantitative analysis of inhomogeneity, we focus on this class.
Indeed, our main theorem shows that the  quantity in \eqref{measureestimate666} satisfies a Large Deviation Principle as $R \to 0$, with a rate function which we give explicitly.  We then give examples of limit sets $E$ and $F$ for which $\dim_\text{A}E-\dim_\text{B}E>\dim_\text{A}F-\dim_\text{B}F$, but the inhomogeneity can be seen to be stronger for $F$ via the rate of large deviations.

We also prove a Central Limit Theorem (CLT) for the variable $A_F^\varepsilon (x,R)$.  This is less directly connected to Assouad dimension since, the set of points being measured here are not the set of points at which the Assouad dimension is `achieved'.  However, this does give some further information on inhomogeneity.  We note that in the case of measures of balls, compared to the local dimension, a CLT was proved in \cite{LepSau12} for certain classes of non-conformal dynamical systems similar to those considered here.

The ideas in this paper extend to a wide class of examples of inhomogeneous fractals.  We note that, for interesting results, it is important that the inhomogeneity varies across the fractal. This means that, for example, the large deviations for limit sets of Mandelbrot percolation will be degenerate.  We would expect Large Deviation principles for general self-affine sets, including the more general carpets introduced in \cite{baranski, feng, fraser, lalley}, higher dimensional analogues \cite{kenyon, sponges}, and generic self-affine sets in the setting of Falconer \cite{affine}, for example.   It would be particularly interesting, and technically challenging, to consider genuinely `parabolic' systems, related to \cite{HuVai09}, where the canonical measures would be expected to give polynomial large deviations.

\section{Our  class of $(\times m, \times n)$-invariant sets}

We will study the type of large deviations problem discussed in the previous section for a class of $(\times m, \times n)$-invariant sets, i.e. compact subset of the 2-torus which are invariant under the endomorphism $f_{m,n}: (x,y) \to (mx \mod 1, ny \mod 1)$ for $m,n \in \mathbb{N}$ with $2\leq m<n$.  First we introduce a symbolic coding of the torus which we identify with $[0,1]^2$ in the natural way.  For $m$ and $n$ as above, let $\mathcal{I} = \{0, \dots, m-1\}$ and $\mathcal{J} = \{0,\dots, n-1\}$ and divide $[0,1]^2$ into a uniform $m \times n$ grid and label the $mn$ resultant subrectangles by $\mathcal{I} \times \mathcal{J}$ in the natural way counting from bottom left to top right.  Choose a subset of the rectangles $\mathcal{D} \subseteq \mathcal{I} \times \mathcal{J}$ of size at least 2 and for each $d = (i,j) \in \mathcal{D}$, associate a contraction $T_d: [0,1]^2 \to [0,1]^2$ defined by $T_d(x,y) = (x/m+i/m, \, y/n+j/n)$ and observe that this is the inverse branch of $f_{m,n}$ restricted to the interior of the rectangle corresponding to $d$ and then extended to the boundary by continuity.  Let $\mathcal{D}^\infty = \{ \mathbf{d} = (d_1, d_2, \dots) : d_l = (i_l,j_l) \in \mathcal{D} \}$ be the set of infinite words over $\mathcal{D}$ and let $\Pi :  \mathcal{D}^\infty \to [0,1]^2$ be the natural projection defined by
\[
\Pi(\mathbf{d}) \ =  \ \bigcap_{l \in \mathbb{N}} \, T_{d_1} \circ \cdots \circ  T_{d_l} \big([0,1]^2\big).
\]
This map gives us our symbolic coding, but we note that some points may not have a unique code.  Write $F = \Pi(\mathcal{D}^\infty)$ and observe that $F$ is both forward and backward invariant under $f_{m,n}$ and, moreover, is a dynamical repeller for $f_{m,n}$.  Such non-conformal repellers have been studied extensively in a variety of contexts, for example, in the theory of iterated function systems $F$ is a self-affine set in the class introduced independently by Bedford \cite{bedford} and McMullen \cite{mcmullen}.  In particular, it is the unique non-empty compact set satisfying
\[
F \ = \ \bigcup_{d \in \mathcal{D}} T_d(F).
\]
Bedford and McMullen computed the Hausdorff and box dimensions of these sets and several years later Mackay \cite{mackay} computed the Assouad dimension, see also \cite{fraser_assouad}. For $i \in \mathcal{I}$, let $C_i = \lvert \{(i',j') \in \mathcal{D} : i'=i\} \rvert$ be the number of chosen rectangles in the $i$th column and let $C_{\max} = \max_{i \in \mathcal{I}} C_i$.  Also let $\pi : \mathcal{I} \times \mathcal{J} \to \mathcal{I}$ be the projection onto the first coordinate. 
\begin{thm}[Bedford-McMullen-Mackay] \label{carpetdimensions}
The Assouad and box dimensions of $F$ are given by 
\[
\dim_\text{\emph{A}} F \ =  \  \frac{\log \lvert\pi\mathcal{D} \rvert }{ \log m} \, + \,  \frac{ \log C_{\max}}{ \log n}
\]
and
\[
\dim_\text{\emph{B}} F \ =  \  \frac{\log \lvert\pi\mathcal{D} \rvert }{ \log m} \, + \,  \frac{ \log(\lvert\mathcal{D} \rvert/ \lvert \pi\mathcal{D} \rvert )}{ \log n}.
\]
\end{thm}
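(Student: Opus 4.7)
My plan for the box dimension is to cover $F$ at the transverse scale $r=m^{-k}$ using the natural cylinder decomposition from $k$ iterations of the IFS. These give $|\mathcal{D}|^k$ affine rectangles of size $m^{-k}\times n^{-k}$; because $m<n$ each rectangle is wider than tall, so many stack inside a single $m^{-k}$-square. I would fix a column sequence $(i_1,\ldots,i_k)\in(\pi\mathcal{D})^k$: the rectangles in this column are enumerated by the row sequences $(j_1,\ldots,j_k)$ with $(i_l,j_l)\in\mathcal{D}$ and occupy $y$-positions $\sum_{l=1}^k j_l n^{-l}$. Setting $L:=\lceil k\log m/\log n\rceil$ (the smallest integer with $n^{-L}\le m^{-k}$), two rectangles whose row sequences agree in the first $L$ coordinates always lie in the same $m^{-k}$-square, and a short check shows that the number of $m^{-k}$-squares meeting the column is $\asymp\prod_{l=1}^L C_{i_l}$. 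Summing over columns,
\[
N_{m^{-k}}(F)\ \asymp\ \sum_{(i_1,\ldots,i_k)\in(\pi\mathcal{D})^k}\,\prod_{l=1}^{L} C_{i_l}\ =\ |\pi\mathcal{D}|^{k-L}\Big(\sum_{i\in\pi\mathcal{D}} C_i\Big)^{L}\ =\ |\pi\mathcal{D}|^{k-L}\,|\mathcal{D}|^{L},
\]
and dividing by $k\log m$ while using $L/k\to\log m/\log n$ yields the stated value of $\dim_\text{B} F$.

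For the Assouad lower bound I would pick $i^*\in\mathcal{I}$ attaining $C_{i^*}=C_{\max}$ and a point $x^*\in F$ coded by the constant word $(i^*,j^*)^\infty$, and then construct a weak tangent at $x^*$ by isotropically rescaling $\b(x^*,m^{-k})$ to $[0,1]^2$ and letting $k\to\infty$. Because $m^{-k}$ is much larger than the fibre scale $n^{-k}$, in the $y$-direction the rescaled window contains many level-$k$ cylinders in the $(i^*)^\infty$ column, so in the limit the $x$-direction sees the self-similar projection $\pi F$ (built from $|\pi\mathcal{D}|$ maps of ratio $1/m$), the $y$-direction sees the self-similar Cantor set $E^*\subset[0,1]$ generated by the $C_{\max}$ maps $y\mapsto y/n+j/n$ with $(i^*,j)\in\mathcal{D}$, and (up to an affine factor) the limit is the product $\pi F\times E^*$. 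Monotonicity of the Assouad dimension under weak tangents and its additivity on such self-similar products then give
\[
\dim_\text{A} F\ \ge\ \dim_\text{A}(\pi F\times E^*)\ =\ \frac{\log|\pi\mathcal{D}|}{\log m}+\frac{\log C_{\max}}{\log n}.
\]

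For the matching upper bound, given $0<r<R\le 1$ and $x\in F$, I would choose $k,K$ with $m^{-k}\approx R$ and $n^{-K}\approx r$, observe that $\b(x,R)\cap F$ meets only a uniformly bounded number of level-$k$ cylinder rectangles, and inside each such rectangle rerun the box-dimension counting at resolution $r$. The key modification is to replace every actual column count $C_{i_l}$ by its uniform worst case $C_{\max}$, which gives
\[
N_r\big(\b(x,R)\cap F\big)\ \lesssim\ |\pi\mathcal{D}|^{\log(R/r)/\log m}\,C_{\max}^{\log(R/r)/\log n}\ =\ (R/r)^{\log|\pi\mathcal{D}|/\log m+\log C_{\max}/\log n}.
\]
The main expected obstacle is the anisotropic two-scale bookkeeping in the Assouad step: the scales $R$ and $r$ are genuinely independent, so one must carefully align generations in the $x$-alphabet (driven by $R$ and $m$) with generations in the $y$-alphabet (driven by $r$ and $n$), and then confirm that the uniform $C_{\max}$ bound really is attained, via the weak tangent used for the lower bound, so that the upper and lower estimates match.
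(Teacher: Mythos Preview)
The paper does not prove this theorem: it is stated with attribution to Bedford, McMullen and Mackay and supported by citations to \cite{bedford,mcmullen,mackay,fraser_assouad}, so there is no in-paper proof to compare against directly. Your plan is essentially the standard route taken in those references --- the column-grouping count for $\dim_\text{B}$ goes back to Bedford and McMullen, and the weak-tangent product $\pi F\times E^*$ for the Assouad lower bound is precisely Mackay's argument --- and, as sketched, it is correct.

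It is worth noting, however, that the paper's own Lemma~\ref{keylemma} effectively \emph{contains} a proof of both formulae. That lemma gives sharp two-sided estimates for $N_r\big(\Pi(Q(\mathbf d,R))\big)$ in terms of the geometric column averages $\mathcal C_{\mathbf d}(R)$ and $\mathcal C_{\mathbf d}(r,R)$, split according to whether $r$ is above or below the critical scale $R^{\log n/\log m}$. From this, the Assouad upper bound follows by bounding $\mathcal C_{\mathbf d}(\cdot)\le C_{\max}$ uniformly, and the Assouad lower bound by choosing $\mathbf d$ with $i_l\equiv i^*$ so that $\mathcal C_{\mathbf d}(\cdot)=C_{\max}$; the box dimension drops out of the $r\to 0$ limit in the first regime. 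Compared with your approach, this avoids the weak-tangent machinery entirely and handles the anisotropic two-scale bookkeeping you flag as the main obstacle in one uniform counting lemma, at the cost of introducing the approximate-square formalism and the averages $\mathcal C_{\mathbf d}$. Your weak-tangent argument, on the other hand, is more conceptual and generalises more readily to settings without such an explicit cylinder structure.
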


Observe that the term $\log \lvert\pi\mathcal{D} \rvert /\log m$ appears in both formulae.  This is the dimension of the self-similar projection of $F$ onto the first coordinate. The second term relates to the dimension of fibres: for the Assouad dimension, $\log C_{\max}/ \log n$ is the maximal fibre dimension (also given by a self-similar set) and for the box dimension, the second term can be interpreted as an `average fibre dimension', observing that $\lvert\mathcal{D} \rvert/ \lvert \pi\mathcal{D} \rvert$ is the arithmetic average of the $C_i$.  The box and Assouad dimensions coincide if and only if $C_i$ is the same for all $i \in \pi \mathcal{D}$, commonly referred to as the \emph{uniform fibres case}.  In this case our subsequent analysis is not very interesting and so from now on we will assume that the $C_i$ are not uniform.  We will not use the box dimension directly, but the value it attains will serve a role in our analysis.  As such we refer the reader to \cite[Chapter 3]{falconer} for more details and basic properties of box dimension.

\subsection{Canonical measures on $F$}
The canonical measures supported on the sets $F$ are Bernoulli measures and are defined as follows.  Let $\{p_{d}\}_{d \in \mathcal{D}}$ be a strictly positive probability vector and $\mu$ be the corresponding Bernoulli measure on $\mathcal{D}^\infty$.  Also, let $\{p_{i}\}_{i \in \pi \mathcal{D}}$ be the corresponding `projected weights', where $p_i = \sum_{d \in \pi^{-1}i} p_d$ is the total weight of the $i$th column.  Finally, let $\mathbb{P} = \mu \circ \Pi^{-1}$ be the push forward of $\mu$, which is a Borel probability measure supported on $F$.

The choice of  $\{p_{d}\}_{d \in \mathcal{D}}$ depends on which measure one considers to be canonical for a particular study.  If one wants the measure of maximal entropy, then $p_d = 1/|\mathcal{D}|$ for all $d \in \mathcal{D}$.  If one wants the measure of maximal (Hausdorff) dimension, then we consider the McMullen measure, with weights:
\[
p_d = \frac{C_{\pi(d)}^{\log m/ \log n -1}}{m^s}
\]
where $s$ is the Hausdorff dimension.  Another natural choice is a measure which is uniform on the column structure, i.e. any choice of $\{p_d\}_{d \in \mathcal{D}}$ satisfying $p_i = 1/|\pi \D|$ (for all $i \in \pi \mathcal{D}$). The push forward of such a measure under $\pi$ is the measure of maximal entropy for the `base map'.  If the measure is uniform on columns and then within each column one distributes the weight evenly among the allowable rectangles, then one obtains a measure which `realises the Assouad dimension', in the sense of Konyagin and Volberg \cite{konyagin} (using approximate squares as balls), see \cite{sponges} for the details.  Moreover, this is the only known invariant probability measure which realises the Assouad dimension.

We stress that we cover \emph{all} Bernoulli measures in our analysis, thus catering to all tastes.  Note that if one wanted to consider Lebesgue measure, one could study the set of $|\D|^k$ depth $k$ rectangles with side $m^{-k}\times n^{-k}$ intersected with the set of interest at level $k$.  The `escape' of Lebesgue measure as the depths increase would interfere with the limit laws close to the mean, but may be dominated by our variable of interest if we are sufficiently far from the mean.

\begin{figure}[H]
	\centering
	\includegraphics[width=160mm]{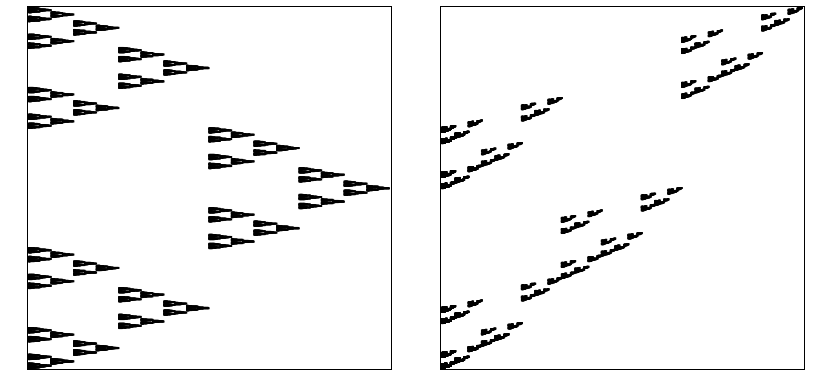}
	\caption{Two examples from our class of self-affine carpets. The example on the left has $m=2$, $n=3$, $C_0=2$, $C_1=1$ and the example on the right has $m=3$, $n=4$, $C_0=2$, $C_1=1$, $C_2=1$.}
\end{figure}

\section{Key technical covering lemmas}

Let $\mathbf{d} \in \mathcal{D}^\infty$ and $r>0$ be small.  Define $l_1(r), l_2(r)$ to be the unique natural numbers satisfying
\[
m^{-l_1(r)} \,  \leq \, r \, < \,  m^{-l_1(r)+1}
\]
and
\[
n^{-l_2(r)} \,  \leq \, r \, < \,  n^{-l_2(r)+1}.
\]
The approximate square `centred'  at $\mathbf{d} = ((i_1, j_1), (i_2, j_2), \dots)  \in \mathcal{D}^\infty$ with `radius' $r>0$ is defined by
\[
Q(\mathbf{d}, r) \ = \ \Big\{ \mathbf{d}' = ((i'_1, j'_1), (i'_2, j'_2), \dots)  \in \mathcal{D}^\infty : i'_l = i_l \ \forall l \leq l_1(r), \text{ and }  j'_l = j_l \ \forall  l \leq l_2(r)  \Big\},
\]
and the geometric projection of this set, $\Pi\big(Q(\mathbf{d}, r)\big)$, is a subset of $F$ which contains $\Pi(\mathbf{d})$ and naturally sits inside a rectangle which is `approximately a square' in that it has a base with length $m^{-l_1(r)} \in (r/m, r]$ and height $n^{-l_2(r)} \in (r/n, r]$.  These sets will play the role of $\b(x, R)$ when we apply our notion of local Assouad dimension. That is, for $\mathbf{d} \in \mathcal{D}^\infty$,  $R>0$ and $\varepsilon>0$ we define:
\[
A_{\mathcal{D}^\infty}^\varepsilon (\mathbf{d},R) \ :=\  \sup_{0< r \leq R^{1+\varepsilon}}\  \frac{\log N_{r} \big( \Pi\big(Q(\mathbf{d}, r)\big)  \big)}{\log R - \log r}.
 \]

For $\mathbf{d} = ( (i_1, j_1), (i_2, j_2), \dots) \in \mathcal{D}^\infty$, $i \in \mathcal{I}$ and $s, t \in \mathbb{N}$ with $s<t$, let
\[
\mathcal{P}_\mathbf{d}^i( s,t ) \ = \ \frac{\lvert \{ i_l = i : l = s, \dots, t\}\rvert}{t-s+1}
\]
be the proportion of time the symbol $i$ is used in the base component of $\mathbf{d}$ between positions $s$ and $t$.  Write 
\[
\mathcal{C}_\mathbf{d}(R) \ = \ \prod_{i \in \mathcal{I}} C_{i}^{ \mathcal{P}_\mathbf{d}^i( l_2(R)+1,l_1(R) ) }
\]
and
\[
\mathcal{C}_\mathbf{d}(r,R) \ = \ \prod_{i \in \mathcal{I}} C_{i}^{ \mathcal{P}_\mathbf{d}^i( l_2(R)+1,l_2(r) ) }
\]
for the geometric average of the $C_i$ chosen according to $\mathbf{d}$ in the ranges $l_2(R)+1, \dots, l_1(R)$ and $ l_2(R)+1, \dots, l_2(r)$ respectively, noting that the first range only depends on $R$.  We can now state our key covering lemma.

\begin{lma} \label{keylemma} Let $0<r<R < 1$ and $\mathbf{d} \in \mathcal{D}^\infty$.  Suppose $l_2(r) \geq  l_1(R)$, then 
\begin{eqnarray*}
\frac{\log N_{r} \big( \Pi\big(Q(\mathbf{d}, R)\big) \big)}{\log R - \log r}  &=&  \frac{\bigg( \frac{\log (\lvert \mathcal{D} \rvert/\mathcal{C}_\mathbf{d}(R) )}{\log m} \, + \,  \frac{\log \mathcal{C}_\mathbf{d}(R)}{\log n}\bigg) \log R   \ -  \  \bigg( \frac{\log \lvert\pi\mathcal{D} \rvert }{ \log m} \, + \,  \frac{ \log(\lvert\mathcal{D} \rvert/ \lvert \pi\mathcal{D} \rvert )}{ \log n}  \bigg) \log r}{\log R - \log r} \\ \\
&\,& \qquad \qquad \qquad \qquad + \ \frac{ O(1)}{\log(R/r)}
\end{eqnarray*}
and if $l_2(r) <  l_1(R)$, then
\begin{eqnarray*}
\frac{\log N_{r} \big( \Pi\big(Q(\mathbf{d}, R)\big) \big)}{\log R - \log r} &=& \frac{\log \lvert\pi\mathcal{D} \rvert }{ \log m} \ +\ \frac{\log \mathcal{C}_\mathbf{d}(r,R)}{\log n} \ + \ \frac{ O(1)}{\log(R/r)}.
\end{eqnarray*}
Moreover, the implied constants in both $O(1)$ terms are independent of $\mathbf{d} \in \mathcal{D}^\infty$.
\end{lma}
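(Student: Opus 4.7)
The plan is to count the number of $r$-scale approximate subsquares tiling $\Pi(Q(\mathbf{d},R))$ and then relate this count to $N_r\big(\Pi(Q(\mathbf{d},R))\big)$. Since each approximate $r$-square has base in $(r/m,r]$ and height in $(r/n,r]$, the two quantities agree up to a multiplicative constant depending only on $m$ and $n$, so only the tiling count needs to be estimated. Symbolically, an $r$-approximate subsquare is determined by a base prefix $i'_1,\ldots,i'_{l_1(r)}$ and a height prefix $j'_1,\ldots,j'_{l_2(r)}$; for the subsquare to lie inside $Q(\mathbf{d},R)$ these must coincide with $\mathbf{d}$ up to levels $l_1(R)$ and $l_2(R)$ respectively, so the task reduces to counting admissible symbolic extensions on disjoint index ranges.

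Suppose first that $l_2(r)\geq l_1(R)$. I would split the extension indices into the three ranges $(l_2(R),l_1(R)]$, $(l_1(R),l_2(r)]$, and $(l_2(r),l_1(r)]$. In the first range the base symbol is fixed by $\mathbf{d}$, so $j'_l$ is chosen from the $C_{i_l}$ admissible rows; the product of these counts is exactly $\mathcal{C}_\mathbf{d}(R)^{l_1(R)-l_2(R)}$ by the definition of $\mathcal{C}_\mathbf{d}(R)$ as a geometric average. In the second range both coordinates are free in $\mathcal{D}$, contributing $\lvert\mathcal{D}\rvert^{l_2(r)-l_1(R)}$. In the third range only the base symbol must be specified, contributing $\lvert\pi\mathcal{D}\rvert^{l_1(r)-l_2(r)}$. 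Taking logarithms, substituting $l_1(\rho)\log m=-\log\rho+O(1)$ and $l_2(\rho)\log n=-\log\rho+O(1)$, and collecting the coefficients of $\log R$ and $\log r$ yields the first formula after dividing by $\log R-\log r$.

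The second case $l_2(r)<l_1(R)$ is more direct because the free-row and free-column ranges become disjoint. For $l\in(l_2(R),l_2(r)]$ the column is still fixed by $\mathbf{d}$ while the row is free, contributing $\mathcal{C}_\mathbf{d}(r,R)^{l_2(r)-l_2(R)}$; for $l\in(l_1(R),l_1(r)]$ only the base symbol is picked, contributing $\lvert\pi\mathcal{D}\rvert^{l_1(r)-l_1(R)}$. Using the identities $l_1(r)-l_1(R)=(\log R-\log r)/\log m+O(1)$ and $l_2(r)-l_2(R)=(\log R-\log r)/\log n+O(1)$, then dividing by $\log R-\log r$, gives the second formula at once.

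The main obstacle is purely the bookkeeping of errors: I need every $O(1)$ contribution — arising from the floor truncations implicit in $l_1(\cdot)$ and $l_2(\cdot)$, from the comparison between cover numbers by approximate squares and by $r$-balls, and from boundary effects at the endpoints of each range — to be uniform in $\mathbf{d}$. This uniformity is automatic because none of these error sources involve the unfixed tail of $\mathbf{d}$, but it must be tracked consistently across both cases so that the implied constant depends only on $m$, $n$, $\lvert\mathcal{D}\rvert$, and $\lvert\pi\mathcal{D}\rvert$.
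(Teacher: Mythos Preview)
Your proposal is correct and follows essentially the same approach as the paper: the three index ranges $(l_2(R),l_1(R)]$, $(l_1(R),l_2(r)]$, $(l_2(r),l_1(r)]$ in the first case and the two ranges $(l_2(R),l_2(r)]$, $(l_1(R),l_1(r)]$ in the second correspond exactly to the paper's geometric iteration stages, yielding the same products $\prod C_{i_l}\cdot|\mathcal{D}|^{l_2(r)-l_1(R)}\cdot|\pi\mathcal{D}|^{l_1(r)-l_2(r)}$ and $\prod C_{i_l}\cdot|\pi\mathcal{D}|^{l_1(r)-l_1(R)}$ respectively. The only difference is cosmetic: you phrase the count symbolically via admissible prefixes of approximate subsquares, whereas the paper phrases it geometrically via horizontal strips and their refinements, but the arithmetic and the handling of the $O(1)$ errors are identical.
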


\begin{proof}
Let $\mathbf{d} = ((i_1, j_1), (i_2, j_2), \dots)  \in \mathcal{D}^\infty$ and first suppose $l_2(r) \geq  l_1(R)$.  To prove this covering result, we rely on the following geometric observations.  When one looks at an approximate square $\Pi\big(Q(\mathbf{d}, R)$ one finds that it is made up of several horizontal strips with base length the same as that of the approximate square, i.e. $O(R)$, and height
\[
n^{-l_1(R)}
\]
which is considerably smaller, but by assumption still larger than $r$.  These strips are images of $F$ under $l_1(R)$-fold compositions of maps $\{T_d\}_{d \in \mathcal{D}}$  and so to keep track of how many there are, one counts rectangles in the appropriate columns for each map in the composition.  The total number is seen to be:
\[
\prod_{l=l_2(R)+1}^{l_1(R)} C_{i_l}.
\]
We want to cover these strips by sets of diameter $r$ and so we iterate the construction inside each horizontal strip until the height is $O(r)$. Indeed, this takes $l_2(r) - l_1(R)$ iterations and, this time, for every iteration we pick up $\lvert \mathcal{D} \rvert$ smaller rectangles, rather than just those inside a particular column.  At this stage we are left with a large collection of rectangles with height $O(r)$ and base somewhat larger.  We now iterate inside each of these rectangles until we obtain a family of rectangles each with base of length $O(r)$.  This takes a further $l_1(r)-l_2(r)$ iterations.  We can then cover the resulting collection by small sets of diameter $O(r)$, observing that different sets formed by this last stage of iteration can be covered simultaneously and so for each of the last iterations we only require a factor of $\lvert\pi\mathcal{D} \rvert$ more covering sets. Putting all these estimates together, yields
\begin{eqnarray*}
N_{r} \big( \Pi\big(Q(\mathbf{d}, R)\big) \big) &\asymp&  \Bigg(\prod_{l=l_2(R)+1}^{l_1(R)} C_{i_l} \Bigg) \ \Bigg( \lvert \mathcal{D} \rvert^{l_2(r) - l_1(R)} \Bigg) \ \Bigg(  \lvert\pi\mathcal{D} \rvert^{l_1(r) - l_2(r)} \Bigg) \\ \\
&=&  \Bigg(\prod_{i \in \mathcal{I}} C_{i}^{ \mathcal{P}_\mathbf{d}^i( l_2(R)+1,l_1(R) ) }\Bigg)^{l_1(R) - l_2(R)} \  \lvert \mathcal{D} \rvert^{l_2(r) - l_1(R)}  \  \lvert\pi\mathcal{D} \rvert^{l_1(r) - l_2(r)} \\ \\
&\asymp&  \mathcal{C}_\mathbf{d}(R)^{\log R /\log n - \log R / \log m} \  \lvert \mathcal{D} \rvert^{\log R /\log m - \log r / \log n}  \  \lvert\pi\mathcal{D} \rvert^{\log r /\log n - \log r / \log m} \\ \\
&=&  R^{\log \mathcal{C}_\mathbf{d}(R) / \log n + \log (\lvert \mathcal{D} \rvert/\mathcal{C}_\mathbf{d}(R) ) /\log m}\  r^{ \log(\lvert\pi\mathcal{D} \rvert/ \lvert \mathcal{D} \rvert )/ \log n \, - \, \log \lvert\pi\mathcal{D} \rvert / \log m}.
\end{eqnarray*}
Taking logs and dividing by $\log(R/r)$ yields
\begin{eqnarray*}
\frac{\log N_{r} \big( \Pi\big(Q(\mathbf{d}, R)\big) \big)}{\log R - \log r}  &=&  \frac{\bigg( \frac{\log (\lvert \mathcal{D} \rvert/\mathcal{C}_\mathbf{d}(R) )}{\log m} \, + \,  \frac{\log \mathcal{C}_\mathbf{d}(R)}{\log n}\bigg) \log R   \ -  \  \bigg( \frac{\log \lvert\pi\mathcal{D} \rvert }{ \log m} \, + \,  \frac{ \log(\lvert\mathcal{D} \rvert/ \lvert \pi\mathcal{D} \rvert )}{ \log n}  \bigg) \log r}{\log R - \log r} \\ \\
&\,& \qquad \qquad + \ \frac{O(1)}{\log(R/r)}
\end{eqnarray*}
as required, noting that the implied constants in $O(1)$ are independent of $\mathbf{d} \in \mathcal{D}^\infty$.

Secondly, suppose $l_2(r) <  l_1(R)$.  We proceed as before, but this time one obtains a family of horizontal strips with height $O(r)$ after $l_2(r)$ step, which is before the bases become smaller than $O(R)$.  The effect is that the `middle term' above (concerning powers of $\lvert \mathcal{D} \rvert$) is not required.  The horizontal strips are then covered as before yielding
\begin{eqnarray*}
N_{r} \big( \Pi\big(Q(\mathbf{d}, R)\big) \big) &\asymp&  \Bigg(\prod_{l=l_2(R)+1}^{l_2(r)} C_{i_l} \Bigg) \ \Bigg(  \lvert\pi\mathcal{D} \rvert^{l_1(r) - l_1(R)} \Bigg) \\ \\
&=&  \Bigg(\prod_{i \in \mathcal{I}} C_{i}^{ \mathcal{P}_\mathbf{d}^i( l_2(R)+1,l_2(r) ) }\Bigg)^{l_2(r) - l_2(R)}  \  \lvert\pi\mathcal{D} \rvert^{l_1(r) - l_1(R)} \\ \\
&\asymp&  \mathcal{C}_\mathbf{d}(r,R)^{\log R /\log n - \log r / \log n}   \  \lvert\pi\mathcal{D} \rvert^{\log R /\log m - \log r / \log m} \\ \\
&=&  (R/r)^{\log \mathcal{C}_\mathbf{d}(r,R) / \log n + \log\lvert \pi D \rvert /\log m}.
\end{eqnarray*}
Taking logs and dividing by $\log(R/r)$ yields
\begin{eqnarray*}
\frac{\log N_{r} \big( \Pi\big(Q(\mathbf{d}, R)\big) \big)}{\log R - \log r} &=& \frac{\log \lvert\pi\mathcal{D} \rvert }{ \log m} \ +\ \frac{\log \mathcal{C}_\mathbf{d}(r,R)}{\log n} \ + \ \frac{O(1)}{\log(R/r)}
\end{eqnarray*}
as required, again noting that the implied constants in $O(1)$ are independent of $\mathbf{d} \in \mathcal{D}^\infty$.
\end{proof}

\section{Our modified family of observables}

Since we are interested in small scales $r$ and $R$ where $r\le R^{1+\eps}$, and thus $R/r$ is large,  it makes sense to replace
\[
\frac{\log N_{r} \big( \Pi\big(Q(\mathbf{d}, R)\big) \big)}{\log R - \log r} 
\]
with the obvious asymptotic formula gleaned from Lemma \ref{keylemma}. As such, let $A(\mathbf{d}, R, r)$ be defined piecewise by
\[
A(\mathbf{d}, R, r) \ = \ \frac{\bigg( \frac{\log (\lvert \mathcal{D} \rvert/\mathcal{C}_\mathbf{d}(R) )}{\log m} \, + \,  \frac{\log \mathcal{C}_\mathbf{d}(R)}{\log n}\bigg) \log R   \ -  \  \bigg( \frac{\log \lvert\pi\mathcal{D} \rvert }{ \log m} \, + \,  \frac{ \log(\lvert\mathcal{D} \rvert/ \lvert \pi\mathcal{D} \rvert )}{ \log n}  \bigg) \log r}{\log R - \log r} 
\]
for $0 < r <R^{\log n/\log m}$ and
\[
A(\mathbf{d}, R, r) \ = \ \frac{\log \lvert\pi\mathcal{D} \rvert }{ \log m} \ +\ \frac{\log \mathcal{C}_\mathbf{d}(r,R)}{\log n}
\]
for $R^{\log n/\log m} \leq  r <R$, observing that the phase transition between the two types of behaviour seen in Lemma \ref{keylemma} occurs at the `critical scale' $r=R^{\log n/\log m}$, where both formulae reduce to
\[
A(\mathbf{d}, R, r) \ =\ \frac{\log \lvert\pi\mathcal{D} \rvert }{ \log m} \ +\ \frac{\log \mathcal{C}_\mathbf{d}(R)}{\log n}.
\]
Since $R$ is our fixed reference scale, we want to understand how $A(\mathbf{d}, R, r) $ varies with $r$ for fixed $R$ and $\mathbf{d}$.   For values of $r$ smaller than $R^{\log n/\log m}$, the `column average' term $\mathcal{C}_\mathbf{d}(R)$ does not depend on $r$ and this makes the asymptotic formula easier to analyse.  Lemma \ref{keylemma} yields
\[
A(\mathbf{d}, R, r)   \ \to \ \frac{\log \lvert\pi\mathcal{D} \rvert }{ \log m} \, + \,  \frac{ \log(\lvert\mathcal{D} \rvert/ \lvert \pi\mathcal{D} \rvert )}{ \log n} \ = \ \dim_\text{B} F
\]
as $r \to 0$. Moreover, elementary optimisation shows that there are three possible `shapes' for $A(\mathbf{d}, R, r) $ in this region.  If $\mathcal{C}_\mathbf{d}(R) > \lvert\mathcal{D} \rvert/ \lvert \pi\mathcal{D} \rvert $, i.e. the observed average is greater than the actual average, then $\dim_\text{B} F < A(\mathbf{d}, R, r)  \leq \dim_\text{A} F$ for $r = R^{\log n/\log m}$, but then $A(\mathbf{d}, R, r)$ strictly decreases to the box dimension as $r \to 0$.  Similarly, if $\mathcal{C}_\mathbf{d}(R) < \lvert\mathcal{D} \rvert/ \lvert \pi\mathcal{D} \rvert $, i.e. the observed average is less than the actual average, then $A(\mathbf{d}, R, r)  < \dim_\text{B} F$ for $r = R^{\log n/\log m}$, but then $A(\mathbf{d}, R, r)$ strictly increases to the box dimension as $r \to 0$.  Finally, if $\mathcal{C}_\mathbf{d}(R) = \lvert\mathcal{D} \rvert/ \lvert \pi\mathcal{D} \rvert $, then $A(\mathbf{d}, R, r) $ is constantly equal to $\dim_\text{B} F$ for $r$ less than the critical scale.

For values of $r$ greater than the critical scale, $A(\mathbf{d}, R, r) $ is more difficult to analyse, despite the apparently simpler formula.  For example, it is not continuous (in $r$) with discontinuities potentially occurring whenever $l_2(r)$ has a discontinuity. 

\begin{figure}[H]
	\centering
	\includegraphics[width=\textwidth]{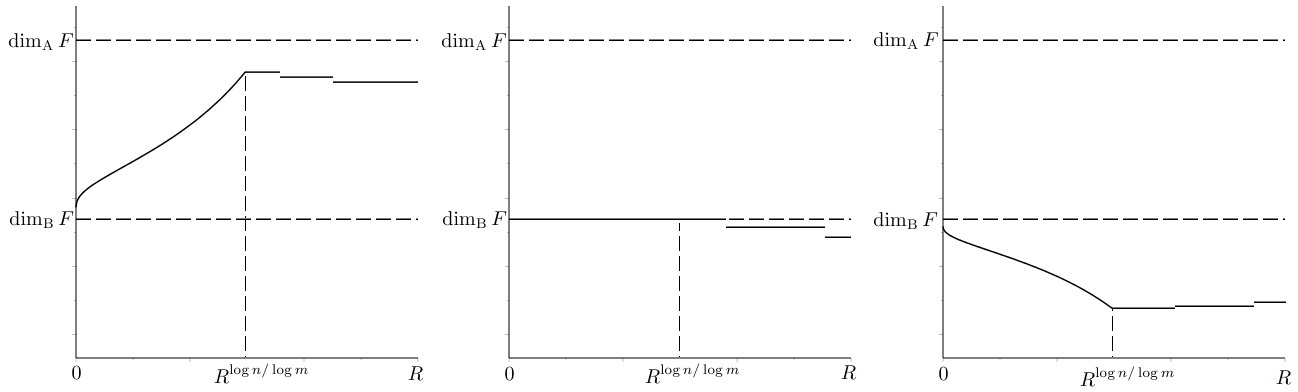}
	\caption{Three plots of $A(\mathbf{d}, R, r)$ as functions of $r$.  (Here $m=2$, $n=3$ and $R =0.3$.) }
\end{figure}

We now wish to study statistical properties of
\[
\sup_{0< r \leq R^{1+\varepsilon}}\  A(\mathbf{d}, R, r)
\]
and, in particular, how it behaves around the maximal value possible, $\dim_\text{A} F$.  It is sufficient to let $R$ tend to zero through an exponential sequence of scales and so to simplify notation slightly from now on we consider $R=n^{-k}$ for $k \in \mathbb{N}$, which renders $l_2(R) = k$. Also, let $\gamma = \log n/\log m$. It follows that
\begin{eqnarray*}
A^\varepsilon_0(\mathbf{d}, k) \ & :=& \  \sup_{0< r \leq n^{-(1+\varepsilon)k}}\  A(\mathbf{d}, n^{-k}, r) \\ \\
  &=  &  \max \left\{  \dim_\text{B} F, \  \sup_{n^{-k \log n/ \log m} \leq r \leq n^{-(1+\varepsilon)k}}\  \frac{\log \lvert\pi\mathcal{D} \rvert }{ \log m} \ +\ \frac{\log \mathcal{C}_\mathbf{d}(r,n^{-k})}{\log n} \right\} \\ \\
  & = &  \max \left\{  \dim_\text{B} F, \ \frac{\log \lvert\pi\mathcal{D} \rvert }{ \log m} \ + \  \max_{L \in \{\lceil (1+\varepsilon )k\rceil, \dots,\lceil \gamma k\rceil\}}\frac{ \sum_{i \in \mathcal{I}}\mathcal{P}_\mathbf{d}^i(k+1, L)  \log C_{i} }{\log n}  \right\}.
\end{eqnarray*}
This reduction is possible since for any $k \in \mathbb{N}$ 
\[
\sup_{0< r \leq n^{-k\log n/ \log m}}\  A(\mathbf{d}, n^{-k}, r)  \ = \ \max \left\{ \dim_\text{B} F, \  \frac{\log \lvert\pi\mathcal{D} \rvert }{ \log m} \ + \   \frac{ \sum_{i \in \mathcal{I}}\mathcal{P}_\mathbf{d}^i(k+1, \lceil \gamma k\rceil)  \log C_{i} }{\log n}    \right\}
\]
by the discussion above.  In particular, for $r$ in this range,  the supremum of $A(\mathbf{d}, n^{-k}, r) $ is obtained either as $r$ approaches 0, in which case it is the box dimension, or when $r$ is the critical scale $n^{-k\gamma}$, in which case it is the second value.

We will study the statistical properties of the observables $A^\varepsilon_0(\mathbf{d}, k)$ in the subsequent two sections.  For now, note that for all $\mathbf{d} \in \mathcal{D}^\infty$ we have
\[
 A_{\mathcal{D}^\infty}^\varepsilon (\mathbf{d},n^{-k})  \ =  \ A_0^\varepsilon(\mathbf{d}, k) \  + \ \frac{ O(1)}{k \varepsilon}
\]
(as $k \to \infty$) with the implied constants independent of $\mathbf{d} \in \mathcal{D}^\infty$.  This follows from Lemma \ref{keylemma} and the above discussion.  Moreover, for all $\mathbf{d} \in \mathcal{D}^\infty$ and $k \in \mathbb{N}$
\[
0 \  < \ \dim_\text{B} F \ \leq \  A_0^\varepsilon(\mathbf{d}, k) \ \leq \  \dim_\text{A} F
\]
and
\[
  \lim_{k \to \infty} \, \sup_{\mathbf{d} \in \mathcal{D}^\infty} \, A_0^\varepsilon(\mathbf{d}, k) \ = \  \dim_\text{A} F.
\]

\section{Large deviations results}

Let
\begin{equation}
\alpha \ := \ \frac{\log \lvert\pi\mathcal{D} \rvert }{ \log m} \ +\  \frac{ \sum_{i \in \mathcal{I}} p_i  \log C_{i} }{\log n} \ < \ \dim_\text{A} F
\label{eq:alpha}
\end{equation}

be the relevant mean. As we are interested in how inhomogeneous $F$ is, we let $\max\{\alpha, \dim_\text{B} F\} \ <\  \lambda \ < \  \dim_\text{A} F$, and $0< \varepsilon < \log n/\log m -1$ and, similarly to \eqref{measureestimate666}, consider
\begin{equation} \label{measureestimate2}
\mu \ \Big( \big\{\mathbf{d} \in \mathcal{D}^\infty \  : \ A_0^\varepsilon(\mathbf{d}, k)>\lambda \big\} \Big).
\end{equation}
It will turn out that this decays exponentially in $k$ and so we obtain a Large Deviation Principle.

By the definition of $A_0^\varepsilon(\mathbf{d}, k)$, (\ref{measureestimate2}) is equal to
\begin{equation} \label{measureestimate3}
\mu \ \left( \left\{\mathbf{d} \in \mathcal{D}^\infty \   : \   \max_{L \in \{\lceil (1+\varepsilon )k\rceil, \dots,\lceil \gamma k\rceil\}} \sum_{i \in \mathcal{I}}\mathcal{P}_\mathbf{d}^i(k+1, L)  \log C_{i}   \ > \  \left(\lambda - \frac{ \log \lvert\pi\mathcal{D} \rvert}{ \log m} \right) \log n  \right\}   \right).
\end{equation}

To simplify notation, write
\[
\lambda' \ =  \ \left(\lambda - \frac{ \log \lvert\pi\mathcal{D} \rvert}{ \log m} \right) \log n 
\]
 and
\[
S_L \  =  \ \frac{1}{L} \sum_{l=1}^{L} \log C_{i_l}.
\]
Using this notation and the fact that $\log C_{i_l}$ is an i.i.d. process we can rewrite  (\ref{measureestimate3}) as
\[
\mu \ \Big( \big\{\mathbf{d} \in \mathcal{D}^\infty \   : \  \max_{L \in \{\lceil \varepsilon k \rceil, \dots, \lceil (\gamma-1) k \rceil \}} S_L > \lambda' \big\}  \Big).
\]

We know from standard large deviations theory (see for example \cite[Section 9]{Bil12}) that there are positive constants $C_1$ and $C_2$ such that for all $L \in \mathbb{N}$
\[
 C_1 \exp\big(-L \cdot I(\lambda')\big)  \ \leq \  \mu \ \Big( \big\{\mathbf{d} \in \mathcal{D}^\infty \   : \    S_L > \lambda' \big\}  \Big) \ \leq \  C_2 \exp\big(-L \cdot I(\lambda')\big) 
\]
where $I(\cdot)$ is the classical rate function which is given by
\[
I(\lambda') = - \lim_{N \to \infty} \frac{1}{N} \log \mu \ \Big( \big\{\mathbf{d} \in \mathcal{D}^\infty \   : \    S_N > \lambda' \big\} \Big).
\]
It is clear that $I(\lambda')=0$ for $\lambda' \leq \sum_{i \in \mathcal \pi D} p_i  \log C_i$ and $I(\lambda') = +\infty$ for $\lambda' \geq \log C_{\max}$. Moreover, since we know the distribution of $\log C_{i_l}$,  $I$ is the Legendre transform of the cumulant generating function
\[
\theta \mapsto \log \mathbb{E}(\exp(\theta \log C_{i_l})) = \log \sum_{i \in \mathcal \pi D} p_i C_i^\theta.
\]
for values of $\lambda'$ in the open interval between these boundary values.  In particular, this means that in this range it is strictly increasing, continuous and convex. 

We have 
\begin{eqnarray*}
\mu \ \Big( \big\{\mathbf{d} \in \mathcal{D}^\infty \   : \   \max_{L \in \{\lceil \varepsilon k \rceil, \dots, \lceil (\gamma-1) k \rceil \}} S_L> \lambda' \big\}  \Big) &\geq&  \mu \ \Big( \big\{\mathbf{d} \in \mathcal{D}^\infty \   : \  S_{\lceil \varepsilon k \rceil} > \lambda' \big\}  \Big) \\ \\
&\geq&   C_1 \exp(- \lceil \varepsilon k \rceil  I(\lambda')) 
\end{eqnarray*}
and the union bound yields
\begin{eqnarray*}
\mu \ \Big( \big\{\mathbf{d} \in \mathcal{D}^\infty \   : \   \max_{L \in \{\lceil \varepsilon k \rceil, \dots, \lceil (\gamma-1) k \rceil \}} S_L> \lambda' \big\}  \Big) & \leq & \sum_{L=  \lceil \varepsilon k \rceil}^{ \lceil (\gamma-1) k \rceil } \mu \ \Big( \big\{\mathbf{d} \in \mathcal{D}^\infty \   : \    S_L > \lambda' \big\}  \Big) \\ \\
 & \leq & \sum_{L=  \lceil \varepsilon k \rceil}^{ \lceil (\gamma-1) k \rceil } C_2 \exp(-L \cdot I(\lambda')) \\ \\
 & \leq &    C_2 \frac{\exp(- \lceil \varepsilon k \rceil I(\lambda'))}{1-\exp(-I(\lambda'))}.
\end{eqnarray*}
Recalling that the mean of our observable is $\alpha$ defined in \eqref{eq:alpha}, the following large deviations result now follows immediately: 

\begin{thm} \label{LDthm}
For $\max\{\alpha, \dim_\text{\emph{B}} F\} \leq \lambda < \dim_\text{\emph{A}} F$ and $0 < \varepsilon < \log n/ \log m-1$, we have 
\[
\lim_{k \to \infty} \  \frac{1}{k} \, \log  \mu  \, \Big( \big\{\mathbf{d} \in \mathcal{D}^\infty \  : \ A_0^\varepsilon(\mathbf{d}, k)>\lambda \big\} \Big) \ = \ - \varepsilon \,  I\left(  \lambda' \right) \ = \ - \varepsilon \,  I\left( \left(\lambda - \frac{ \log \lvert\pi\mathcal{D} \rvert}{ \log m} \right) \log n \right).
\]
\end{thm}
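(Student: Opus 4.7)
The plan is to invoke the two-sided bounds developed in the text immediately preceding the statement and extract the exponential rate by a simple squeeze. Under the hypothesis $\lambda > \max\{\alpha,\dim_\text{B} F\}$, the inequality $\lambda > \dim_\text{B} F$ eliminates the outer maximum in the definition of $A_0^\varepsilon(\mathbf{d},k)$, so the event $\{A_0^\varepsilon(\mathbf{d},k) > \lambda\}$ coincides with $\{\max_L S_L > \lambda'\}$ for $L$ ranging over $\{\lceil\varepsilon k\rceil,\dots,\lceil(\gamma-1)k\rceil\}$. This reformulates the geometric question as a classical Cram\'er-type large deviations problem for the i.i.d.\ variables $\log C_{i_l}$.

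For the lower bound I would simply restrict the maximum to the single value $L = \lceil\varepsilon k\rceil$ and invoke $\mu(S_{\lceil\varepsilon k\rceil} > \lambda') \geq C_1 e^{-\lceil\varepsilon k\rceil I(\lambda')}$. For the upper bound I would apply the union bound over $L$ and sum the resulting geometric series $\sum_{L\geq\lceil\varepsilon k\rceil} e^{-LI(\lambda')} = e^{-\lceil\varepsilon k\rceil I(\lambda')}/(1-e^{-I(\lambda')})$; positivity of $I(\lambda')$, which follows from $\lambda > \alpha$ together with the convex-analytic properties of the rate function, keeps the series finite and the denominator a $k$-independent constant. Both bounds then take the form $C \exp(-\lceil\varepsilon k\rceil I(\lambda'))$, so taking $\tfrac{1}{k}\log$ and letting $k\to\infty$ sandwiches the limit at $-\varepsilon I(\lambda')$.

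The substantive point --- which I would flag as the conceptual obstacle rather than a calculation to grind through --- is the observation that because $I$ is strictly increasing on the relevant interval, the Cram\'er probability $e^{-LI(\lambda')}$ is monotone decreasing in $L$, so the \emph{smallest} admissible window length dominates. This is precisely why the rate is $\varepsilon I(\lambda')$ rather than $I(\lambda')$ or $(\gamma-1)I(\lambda')$: an atypical average $S_L > \lambda'$ is cheapest to realise on the shortest window $L \approx \varepsilon k$, and the factor $\varepsilon$ in the statement records this. The only remaining technical checks are that $I(\lambda') < \infty$ (guaranteed by $\lambda < \dim_\text{A} F$, equivalently $\lambda' < \log C_{\max}$) and that the boundary case $\lambda = \alpha$ with $I(\lambda') = 0$ produces the consistent answer $0$.
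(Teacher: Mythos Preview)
Your proposal is correct and follows essentially the same argument as the paper: the paper also reduces the event to $\{\max_{L}S_L>\lambda'\}$, bounds it below by the single term $L=\lceil\varepsilon k\rceil$ and above by a union bound summed as a geometric series, and then reads off the rate $-\varepsilon I(\lambda')$. One small quibble: you write the hypothesis as $\lambda>\max\{\alpha,\dim_\text{B}F\}$ while the theorem allows equality, but since the defining event uses strict inequality $A_0^\varepsilon(\mathbf{d},k)>\lambda$, the $\dim_\text{B}F$ branch of the maximum is already inactive when $\lambda=\dim_\text{B}F$, and you correctly handle $\lambda=\alpha$ at the end.
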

Note that $I\big(\lambda' \big)$ is increasing in $\lambda$ and
\[
I\big(\lambda' \big) \searrow 0  \qquad (\lambda \searrow \alpha)
\]
and
\[
I\big(\lambda' \big)  \ \nearrow  \  -\log \Bigg( \sum_{i \, : \,  C_i = C_{\max}} p_i \Bigg) \, >0 \qquad  (\lambda \nearrow \dim_\text{A} F).
\]
Interestingly, if $\dim_\text{B} F>\alpha$, then the rate function does not approach 0 at the left hand side of its domain; since the domain is restricted by $\max\{\alpha, \dim_\text{B} F\}$.   The limit in Theorem \ref{LDthm} is zero for all $\lambda$ smaller than this value and so if $\dim_\text{B} F>\alpha$, then the large deviations has a discontinuity at $\lambda=\dim_\text{B} F$.  Indeed, if $\mu$ is such that the weights $\{p_i\}_{i \in \pi \mathcal{D}}$ are uniform, i.e. all equal to $1/\lvert \pi \mathcal{D} \rvert$, then $\alpha$ is strictly smaller than the box dimension by virtue of the arithmetic-geometric mean inequality.

\subsection{An alternative formulation} \label{formulation2}
Of course, the above large deviations result can also be expressed in terms of an observable based on points $x \in F$, rather than $\mathbf{d}  \in \mathcal{D}^\infty$, and arbitrary scales $R \to 0$, rather than the sequence $n^{-k}$.  This is what was originally proposed in the introduction. We have to be slightly careful here if $x$ is coded by multiple $\mathbf{d} \in \mathcal{D}^\infty$, but the number of codes for a given $x$ can be no more than 4 and it can be shown that the set of $x$ with non-unique codes is of $\mathbb{P}$-measure zero.  One way of defining such an observable  in our setting is:
\[
A_F^\eps(x, R) \ := \ \max\{  A_{\mathcal{D}^\infty}^\eps(\mathbf{d}, R) : \Pi(\mathbf{d}) = x\}.
\]
Alternatively, if one wanted an observable of exactly the same form as that in the introduction, then one could choose $C(x,R)$ to be the approximate $R$-square centered at $\mathbf{d} \in \mathcal{D}^\infty$, where $\mathbf{d}$ is the lexicographically minimal code for $x$, for example.  The following result can be obtained for either version.
\begin{thm}
For $\max\{\alpha, \dim_\text{\emph{B}} F\} \leq \lambda < \dim_\text{\emph{A}} F$ and $0 < \varepsilon < \log n/ \log m-1$, we have
\[
\lim_{R \to 0} \  \frac{\log  \mathbb{P}  \, \Big( \big\{ x \in F  \  : \ A_F^\eps(x, R) >\lambda \big\} \Big)}{\log R}   \  = \ \frac{\varepsilon \,  I\left(  \lambda' \right)}{\log n} \ = \  \frac{\varepsilon \,  I\left( \left(\lambda - \frac{ \log \lvert\pi\mathcal{D} \rvert}{ \log m} \right) \log n \right)}{\log n}.
\]
\end{thm}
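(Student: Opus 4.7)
The plan is to reduce to Theorem \ref{LDthm} in three steps: (i) transfer the event from $\mathbb{P}$ on $F$ to $\mu$ on $\mathcal{D}^\infty$; (ii) establish the limit along the geometric sequence $R = n^{-k}$; and (iii) extend to arbitrary $R \to 0$ by a sandwich argument.

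For step (i), with the first definition $A_F^\eps(x,R) := \max\{A_{\mathcal{D}^\infty}^\eps(\mathbf{d}, R) : \Pi(\mathbf{d})=x\}$, the preimage under $\Pi$ of $\{x \in F : A_F^\eps(x,R) > \lambda\}$ is exactly $\{\mathbf{d} \in \mathcal{D}^\infty : A_{\mathcal{D}^\infty}^\eps(\mathbf{d}, R) > \lambda\}$, and since $\mathbb{P} = \mu \circ \Pi^{-1}$ the two measures assign the same mass to these events. For the alternative definition based on the lexicographically minimal code, the symmetric difference with the max version lifts under $\Pi^{-1}$ to a subset of the codes of points with non-unique representations, which the paper notes is a $\mu$-null set and is therefore invisible to the exponential rate.

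For step (ii), I would use the estimate established earlier in the paper,
\[
A_{\mathcal{D}^\infty}^\eps(\mathbf{d}, n^{-k}) \ = \ A_0^\eps(\mathbf{d}, k) \, + \, \frac{O(1)}{k\eps},
\]
with implied constants uniform in $\mathbf{d}$. Given any $\delta>0$, this yields for all sufficiently large $k$
\[
\{\mathbf{d}: A_0^\eps(\mathbf{d}, k) > \lambda+\delta\} \ \subseteq \ \{\mathbf{d}: A_{\mathcal{D}^\infty}^\eps(\mathbf{d}, n^{-k}) > \lambda\} \ \subseteq \ \{\mathbf{d}: A_0^\eps(\mathbf{d}, k) > \lambda-\delta\}.
\]
Applying Theorem \ref{LDthm} to both outer events and letting $\delta \to 0$, continuity of $I$ on its open domain (which contains $\lambda'$ because $\max\{\alpha,\dim_\text{B} F\} \leq \lambda < \dim_\text{A} F$) gives
\[
\lim_{k \to \infty} \frac{1}{k} \log \mu \big(\{\mathbf{d}: A_{\mathcal{D}^\infty}^\eps(\mathbf{d}, n^{-k}) > \lambda\}\big) \ =\ -\eps \, I(\lambda').
\]
Dividing by $\log n^{-k} = -k \log n$ produces the stated limit along the subsequence $R_k = n^{-k}$.

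For step (iii), given $R \in (0,1)$ set $k = \lfloor -\log R/\log n \rfloor$, so $n^{-(k+1)} < R \leq n^{-k}$. Since $l_1(R)$ and $l_2(R)$ are locally constant in $R$ (jumping only at integer powers of $m$ and $n$ respectively), the combinatorial count in Lemma \ref{keylemma} coincides with the count at $R = n^{-k}$ up to a uniformly bounded multiplicative constant, and hence $A_{\mathcal{D}^\infty}^\eps(\mathbf{d}, R)$ differs from $A_{\mathcal{D}^\infty}^\eps(\mathbf{d}, n^{-k})$ by $O(1/(k\eps))$ uniformly in $\mathbf{d}$. The same $\delta$-perturbation argument then sandwiches the probability at scale $R$ between two probabilities at scale $n^{-k}$ with slightly shifted thresholds. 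Finally, since $\log R = -k\log n + O(1)$, the ratio $\log \mathbb{P}/\log R$ differs from $\log \mathbb{P}/(-k\log n)$ by a multiplicative factor $1+O(1/k)$ which vanishes in the limit.

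The main obstacle is ensuring that the error terms separating the continuous observable $A_{\mathcal{D}^\infty}^\eps$ from the discrete surrogate $A_0^\eps$ are genuinely uniform in $\mathbf{d}$. Without such uniformity the perturbation $\lambda \pm \delta$ could not be taken to zero independently of the random word, and the $\delta \to 0$ limit could not be interchanged with the large-deviation limit in $k$. Fortunately this uniformity is exactly what Lemma \ref{keylemma} provides.
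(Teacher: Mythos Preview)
Your proof is correct and follows exactly the strategy the paper sketches: use the uniform $O(1/k)$ comparison between $A_{\mathcal{D}^\infty}^\eps$ and $A_0^\eps$, invoke continuity of $I$ to absorb the $\pm\delta$ perturbation, and pass from the exponential subsequence $R=n^{-k}$ to general $R$ by a sandwich argument. One small slip in step~(i): under the max-definition the $\Pi$-preimage of $\{x: A_F^\eps(x,R)>\lambda\}$ is not \emph{exactly} $\{\mathbf{d}: A_{\mathcal{D}^\infty}^\eps(\mathbf{d},R)>\lambda\}$ but rather contains it (a code $\mathbf{d}$ may fail the threshold while another code of $\Pi(\mathbf{d})$ succeeds), though the difference sits inside the codes of points with non-unique representation and is $\mu$-null, exactly as you observe for the lexicographic version.
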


\begin{proof}
This is a straightforward corollary of Theorem \ref{LDthm} and we omit the details.  The important points are that $A_{\mathcal{D}^\infty}^\eps(\mathbf{d}, n^{-k})-A_0^\eps(\mathbf{d}, k) =O(1/k)$ (with the implied constants independent of $\mathbf{d} \in \mathcal{D}^\infty$) and the rate function is continuous.   Also, since we are taking logs, letting $R$ tend to zero through the exponential sequence of scales $R=n^{-k}$ $(k \to \infty)$ does not affect the convergence.  
\end{proof}

At first sight, it may seem a little strange that the extra factor of $1/\log n $ appears in this formulation of our result since it looks as though it comes from our choice of sequence $R=n^{-k} \to 0$.  Of course the same result is obtained by letting $R \to 0$ through \emph{any} sequence and in fact the factor of $1/\log n $ appears because $l_2(R) \sim -\log R/\log n$.

\subsection{Examples}

We will now consider two pairs of examples.  Throughout this section we fix $m=3$ and $n=4$ and $\mu$ will always be a Bernoulli measure which assigns uniform weight to the columns (1/3 in every case).  

 We will now describe the first pair of examples.  For the first carpet let $C_0=3$, $C_1=2$,  $C_2=2$ and for the second carpet, let $C_0=4$, $C_1=1$,  $C_2=1$.  For the first example
\[
\dim_\text{A} F \ =  \  1+ \frac{\log 3}{\log 4} \ \approx \ 1.792
\]
and
\[
\dim_\text{B} F   \ =  \  1+ \frac{\log (7/3)}{\log 4} \ \approx \ 1.611.
\]
For the second example
\[
\dim_\text{A} F \ =  \  2
\]
and
\[
\dim_\text{B} F   \ =  \  1+ \frac{\log (6/3)}{\log 4} \ = 1.5.
\]
Thus, if we measure inhomogeneity  in the crude fashion alluded to in the introduction by considering the difference between the Assouad and box dimensions, then we deduce that the second example is more inhomogeneous than the first.  However, our large deviations analysis yields finer information. The rate function $\lambda \mapsto I(\lambda')$ is plotted on the left hand side of Figure \ref{plotsfig} below, with the first example a solid line and the second example a dashed line.  We observe that although the first example was `less inhomogeneous' by the crude measure, the exponential rate of decay in the large deviations result is much larger for first example in the range 1.72 to 1.79 indicating a greater degree of inhomogeneity when viewed at that `dimension'.

 We will now describe the second pair of examples.  For the first example, let  $C_0=3$, $C_1=3$, $C_2=1$ and for the second example, let $C_0=3$, $C_1=2$, $C_2=2$.  For both examples 
\[
\dim_\text{A} F \ =  \  1+ \frac{\log 3}{\log 4} \ \approx \ 1.792,
\]
and
\[
\dim_\text{B} F   \ =  \  1+ \frac{\log (7/3)}{\log 4} \ \approx \ 1.611.
\]
but the large deviations are rather different.  The rate function $\lambda \mapsto I(\lambda')$ is plotted on the right hand side of Figure \ref{plotsfig} below, with the first example a solid line and the second example a dashed line.  Note that the rate functions approach different limits as $\lambda \to \dim_\text{A} F$.  This is because for the first example there are two columns which realise the Assouad dimension ($i=0$ or $1$) and so the rate function approaches $-\log(2/3)$ instead of $-\log(1/3)$, which is the limit for the second example.  This can be interpreted as the first example being more homogeneous near the Assouad dimension because there are more locations in the fractal which return values close to the maximum.

\begin{figure}[H]  \label{plotsfig}
\setlength{\unitlength}{\textwidth}
\begin{picture}(1,0.47)
\put(0.02,0){\includegraphics[trim=3cm 15cm 8cm 0cm, width=0.45\textwidth]{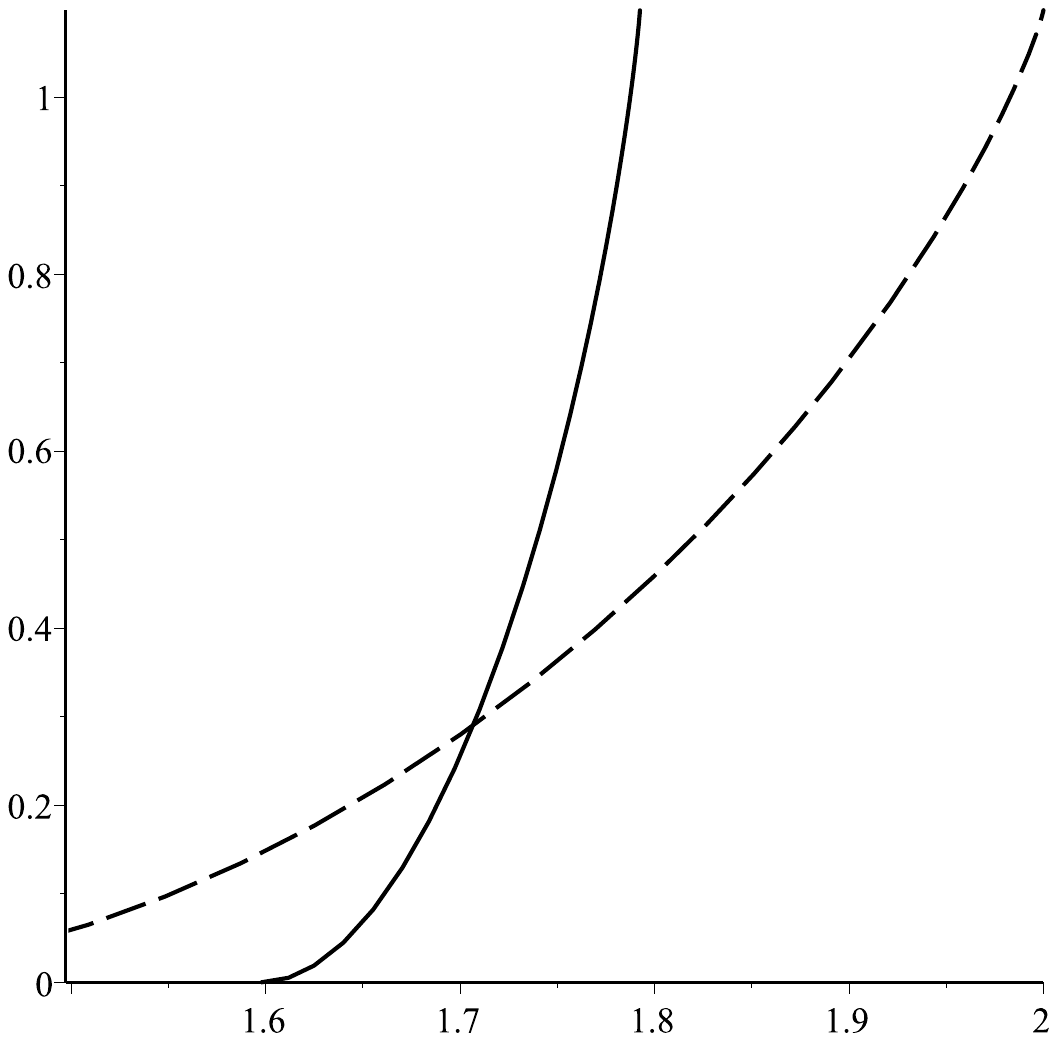}}
\put(0.53,0){\includegraphics[trim=3cm 15cm 8cm 0cm, width=0.45\textwidth]{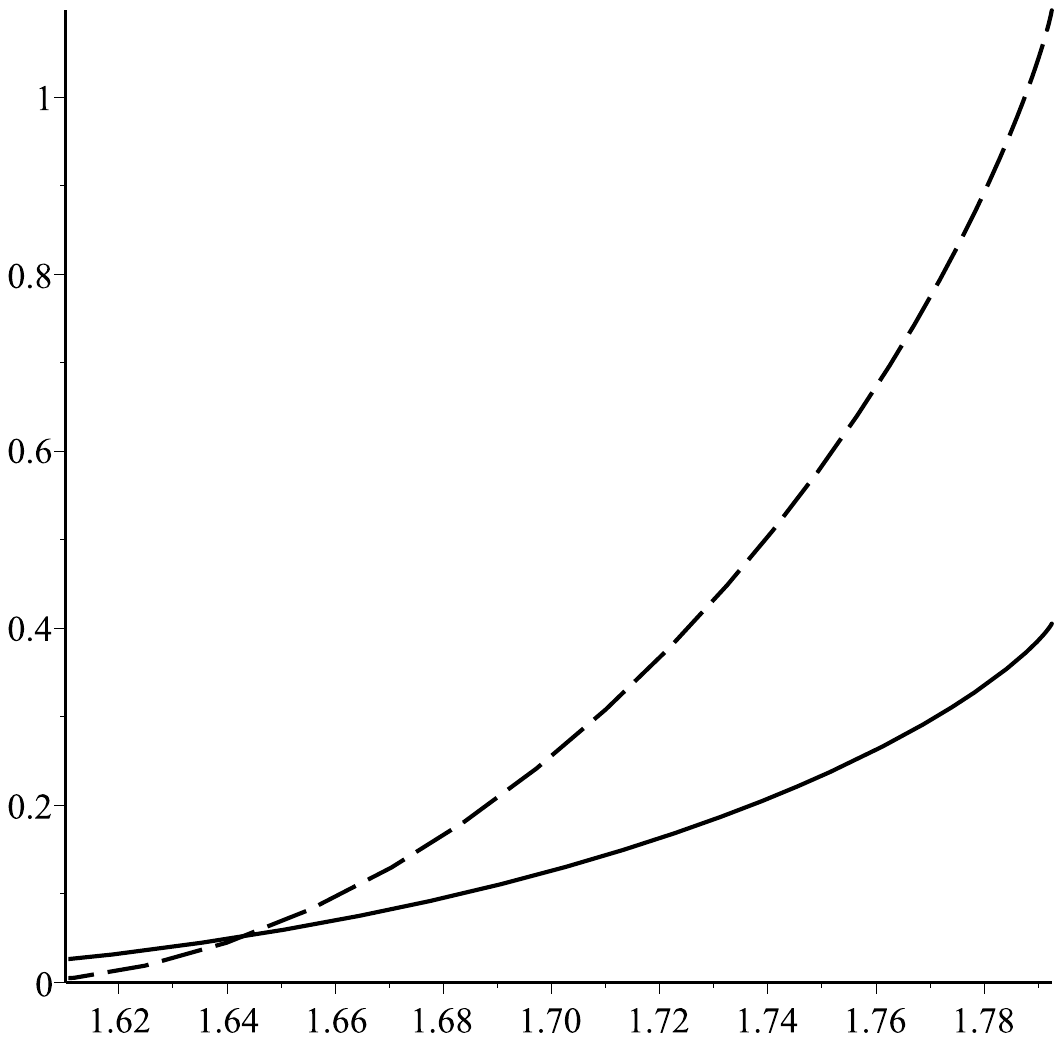}}
\end{picture}
\caption{Two plots of $\lambda \mapsto I(\lambda')$ in the relevant ranges.  Note that none of the functions approach zero at the left hand side of their domain.}
\end{figure}

\section{A Central Limit Theorem for a single scale}

In the previous section we saw that the large deviations for the `maximal average' process were dominated by the first term.  As such, in this section we consider the limiting behaviour one gets when only considering that term, i.e. without taking a maximum. Fix $0< \varepsilon < \log n/\log m -1$,  let $\delta = 1+\varepsilon$ and consider
\[
A^\delta(\mathbf{d}, k) \ : = \    \frac{\log \lvert\pi\mathcal{D} \rvert }{ \log m} \ +\  \frac{\sum_{i \in \mathcal{I}}\mathcal{P}_\mathbf{d}^i(k, \lceil \delta k \rceil)  \log C_{i} }{\log n} .
\]

We know from the previous section that for $\mu$ almost all $\mathbf{d}$ we have
\[
A^\delta(\mathbf{d}, k) \ \to \   \frac{\log \lvert\pi\mathcal{D} \rvert }{ \log m} \ +\  \frac{ \sum_{i \in \mathcal{I}} p_i  \log C_{i} }{\log n} \ = \  \alpha \ \ (k \to \infty),
\]
but we can say more about the distribution around the limit.  Note that $\alpha$ is independent of $\varepsilon$ (and thus $\delta$). We are interested in finding a sequence of functions $\alpha_k: \mathbb{R} \to \mathbb{R}$ such that
\begin{equation} \label{measureestimate}
\mu \ \Big( \big\{\mathbf{d} \in \mathcal{D}^\infty \  : \ A^\delta(\mathbf{d}, k)>\alpha - \alpha_k(\tau)\big\} \Big)
\end{equation}
converges to a non-degenerate function of $\tau$ as $k \to \infty$.  By the definition of $A^\delta(\mathbf{d}, k)$ and the formula for $\alpha$ above, (\ref{measureestimate}) is equal to
\[
\mu \ \Big( \big\{\mathbf{d} \in \mathcal{D}^\infty \   : \   \sum_{i \in \mathcal{I}}\mathcal{P}_\mathbf{d}^i(k, \lceil \delta k \rceil )  \log C_{i}  > \Big(\sum_{i \in \mathcal{I}} p_i  \log C_{i}\Big) - \alpha_k(\tau) \log n \big\} \Big).
\]
Observe that
\[
S_k \ :=  \ \sum_{i \in \mathcal{I}}\mathcal{P}_\mathbf{d}^i(k, \lceil \delta k \rceil )  \log C_{i}  \ = \ \frac{1}{\lceil \delta k \rceil - k + 1} \sum_{l=k}^{\lceil \delta k \rceil} \log C_{i_l},
\]
and since $\mu$ is a Bernoulli measure, the events $\log C_{i_l}$ are i.i.d. according to $\{p_i\}_{i \in \pi \mathcal{D}}$ with mean
\[
c: = \sum_{i \in \pi \mathcal{D}} p_i \log C_i \ \in \ [0, \log C_{\max} ]
\]
and variance
\[
\sigma := \sum_{i \in \pi \mathcal{D}} p_i (\log C_i - c)^2 \  > \  0.
\]
Thus the classical Central Limit Theorem (see for example \cite[Section 27]{Bil12}) yields
\[
\frac{\sqrt{\lceil \delta k \rceil - k + 1} \, (c-S_k)}{\sqrt{\sigma} }  \Rightarrow \Phi,
\]
where $\Phi$ denotes the Normal Distribution and $\Rightarrow$ denotes convergence in distribution.  Define $\alpha_k: \mathbb{R} \to \mathbb{R}$ by
\[
\alpha_k(\tau) \ := \  \frac{\tau \sqrt{\sigma}}{(\log n)\sqrt{\lceil \delta k \rceil - k + 1}}
\]
and note that
\[
S_k >  \Big(\sum_{i \in \mathcal{I}} p_i  \log C_{i}\Big) - \alpha_k(\tau) \log n  = c - \alpha_k(\tau) \log n
\]
if and only if
\[
\tau > \frac{\sqrt{\lceil \delta k \rceil - k + 1}\,  (c-S_k)}{\sqrt{\sigma}}.
\]
We therefore have the following Central Limit Theorem:
\begin{thm} \label{CLTthm} For $\tau \in \mathbb{R}$
\[
\lim_{k \to \infty}\  \mu \ \Big( \big\{\mathbf{d} \in \mathcal{D}^\infty \  : \ A^\delta(\mathbf{d}, k)> \alpha-\alpha_k(\tau)\big\} \Big) \ = \ \Phi(\tau).
\]
\end{thm}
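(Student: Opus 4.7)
The plan is essentially to unwind the definition of $A^\delta(\mathbf{d}, k)$ into a sample mean of i.i.d.\ random variables and then quote the classical CLT. First, I would rewrite $A^\delta(\mathbf{d},k) = \frac{\log|\pi\mathcal{D}|}{\log m} + \frac{S_k}{\log n}$, where $S_k$ is the average of the i.i.d.\ random variables $\log C_{i_l}$ (under the Bernoulli measure $\mu$) over indices $l = k, \dots, \lceil \delta k \rceil$. These have mean $c = \sum_{i \in \pi\mathcal{D}} p_i \log C_i$ and variance $\sigma > 0$ (strictly positive because we have assumed that the fibres are non-uniform, so the $C_i$ are not all equal on $\pi\mathcal{D}$; this is the only place we use that assumption in this argument).

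Next, I would rewrite the event in the statement. Using $\alpha = \frac{\log|\pi\mathcal{D}|}{\log m} + \frac{c}{\log n}$, the inequality $A^\delta(\mathbf{d},k) > \alpha - \alpha_k(\tau)$ simplifies to $S_k > c - \alpha_k(\tau) \log n$. Plugging in the chosen normalisation $\alpha_k(\tau) = \frac{\tau \sqrt{\sigma}}{(\log n)\sqrt{\lceil \delta k \rceil - k + 1}}$ and rearranging, this is equivalent to
\[
Z_k \ := \ \frac{\sqrt{\lceil \delta k \rceil - k + 1}\,(c - S_k)}{\sqrt{\sigma}} \ < \ \tau.
\]
Since the sample size $\lceil \delta k \rceil - k + 1 \to \infty$ as $k \to \infty$ (as $\delta > 1$), the classical CLT gives $Z_k \Rightarrow \Phi$. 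A standard fact about weak convergence to a continuous limit then yields $\mu(Z_k < \tau) \to \Phi(\tau)$ pointwise in $\tau \in \mathbb{R}$, which is exactly the claimed limit.

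There is no serious obstacle here: the content of the theorem really is a direct corollary of the classical CLT for i.i.d.\ sums, packaged through the affine change of variables above. The only minor care needed is the integer rounding in $\lceil \delta k \rceil$ and the fact that the sum is over $l = k, \dots, \lceil \delta k \rceil$ rather than $l = 1, \dots, N$; both are handled by observing that a CLT for i.i.d.\ variables holds for any index range whose length tends to infinity, which follows from translation invariance of the i.i.d.\ product measure. If desired, one could also remark that continuity of $\Phi$ makes the convergence uniform in $\tau$, but the statement only claims pointwise convergence.
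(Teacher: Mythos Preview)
Your proposal is correct and follows essentially the same route as the paper: rewrite $A^\delta(\mathbf{d},k)$ as an affine function of the sample mean $S_k$ of the i.i.d.\ variables $\log C_{i_l}$, reduce the event to $\{Z_k<\tau\}$ for the standardised variable, and apply the classical CLT. If anything, you are slightly more explicit than the paper about why the index range $l=k,\dots,\lceil\delta k\rceil$ causes no trouble (translation invariance of the Bernoulli measure) and about invoking continuity of $\Phi$ to pass from $Z_k\Rightarrow\Phi$ to pointwise convergence of the probabilities.
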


We note that although the set of points being (asymptotically) measured here are not those at which Assouad dimension is `achieved', this nevertheless can give a measure of how inhomogeneous the set $F$ is.

Clearly if we assume $p_d>0$ for all $d \in \mathcal{D}$ (which is require for $\mu$ to have $F$ as its support) and $C_i$ is not the same for every $i \in \pi \mathcal{D}$, then $\alpha$ is strictly less than the Assouad dimension of $F$.  However, $\alpha$ may be larger or smaller than the box dimension; recall the discussion in the previous section. 

\subsection{An alternative formulation}

As with the large deviations result, we can provide an alternative formulation of the Central Limit Theorem from the previous section.  For $R>0$ and $x \in F$, let
\[
A_F^\delta(x, R) \ : = \    \frac{\log \lvert\pi\mathcal{D} \rvert }{ \log m} \ + \  \max_{\mathbf{d} \in \mathcal{D}^\infty : \Pi(\mathbf{d}) = x} \frac{\sum_{i \in \mathcal{I}}\mathcal{P}_\mathbf{d}^i(l_2(R), \lceil \delta l_2(R) \rceil)  \log C_{i} }{\log n} 
\]
and
\[
\alpha_R(\tau) \ := \  \frac{\tau \sqrt{\sigma}}{(\log n)\sqrt{\lceil \delta l_2(R) \rceil - l_2(R) + 1}}.
\]
The following result can be obtained in a similar fashion to Theorem \ref{CLTthm}:

\begin{thm} For $\tau \in \mathbb{R}$
\[
\lim_{R  \to 0}\  \mathbb{P} \ \Big( \big\{ x \in F \  : \ A_F^\delta(x, R)> \alpha-\alpha_R(\tau)\big\} \Big) \ = \ \Phi(\tau).
\]
\end{thm}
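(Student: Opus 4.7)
The plan is to reduce the statement directly to Theorem~\ref{CLTthm} by a two-step comparison: first pass from arbitrary scales $R\to 0$ to the exponential sequence $R=n^{-k}$, and then pass from a point-based observable to a code-based one. Both reductions should be cheap, because the object to be measured only depends on $R$ through the integer $l_2(R)$.

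First I would observe that both $A_F^\delta(x,R)$ and the normalising function $\alpha_R(\tau)$ depend on $R$ only via $k:=l_2(R)$. Indeed $l_2(R)$ takes a fixed integer value $k$ precisely when $R\in[n^{-k},n^{-k+1})$, and on such an interval the sum $\sum_i \mathcal{P}_{\mathbf d}^i(l_2(R),\lceil\delta l_2(R)\rceil)\log C_i$ and the normalising constant $\sqrt{\lceil\delta l_2(R)\rceil-l_2(R)+1}$ are both constants (depending only on $k$). Consequently, letting $R\to 0$ through arbitrary values is equivalent to letting $k=l_2(R)\to\infty$ through $\mathbb N$. So it suffices to prove the result for $R=n^{-k}$, $k\to\infty$.

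Second, I would compare the point-based observable to the code-based one from Theorem~\ref{CLTthm}. For each $x\in F$, set
\[
\widetilde A(x,k)\ :=\ \max_{\mathbf d\in\Pi^{-1}(x)} A^\delta(\mathbf d,k),
\]
so that $A_F^\delta(x,n^{-k})=\widetilde A(x,k)$. Every $x\in F$ has at most $4$ preimages under $\Pi$ (boundary points of approximate squares), and the set $B\subset F$ of points with non-unique codes has $\mathbb P$-measure zero, as is standard for Bedford--McMullen carpets with a Bernoulli measure of full support. For $x\notin B$ the maximum is trivial and $\widetilde A(x,k)=A^\delta(\mathbf d(x),k)$ for the unique code $\mathbf d(x)$. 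Since $\mathbb P=\mu\circ\Pi^{-1}$ and $\Pi$ is a measure-preserving bijection off a null set, the $\mathbb P$-distribution of $x\mapsto \widetilde A(x,k)$ agrees with the $\mu$-distribution of $\mathbf d\mapsto A^\delta(\mathbf d,k)$. Applying Theorem~\ref{CLTthm} at $k=l_2(R)$ then gives
\[
\mathbb P\Big(\big\{x\in F:A_F^\delta(x,R)>\alpha-\alpha_R(\tau)\big\}\Big)\ =\ \mu\Big(\big\{\mathbf d\in\mathcal D^\infty:A^\delta(\mathbf d,k)>\alpha-\alpha_k(\tau)\big\}\Big)\ \longrightarrow\ \Phi(\tau).
\]

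The only genuinely delicate point is the measurement of multiply-coded points; the rest is bookkeeping about the step function $R\mapsto l_2(R)$. If one prefers to avoid maxima over preimages, the same argument applies verbatim to the lexicographic variant of $A_F^\delta$ proposed in Section~\ref{formulation2}, since it again differs from the symbolic observable only on the null set~$B$. There is no need here for the continuity of the limit law or for any further slack of order $O(1/k)$, because the identification between $A_F^\delta(x,R)$ and $A^\delta(\mathbf d,l_2(R))$ is exact rather than asymptotic, unlike the large deviations argument where the $O(1/(k\varepsilon))$ discrepancy between $A_{\mathcal D^\infty}^\varepsilon$ and $A_0^\varepsilon$ had to be absorbed.
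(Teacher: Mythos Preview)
Your proposal is correct and is precisely the kind of argument the paper has in mind. The paper does not actually give a proof of this theorem; it only remarks that it ``can be obtained in a similar fashion to Theorem~\ref{CLTthm}.'' Your two-step reduction---first observing that both $A_F^\delta(x,R)$ and $\alpha_R(\tau)$ depend on $R$ only through the integer $l_2(R)$, and then identifying the $\mathbb P$-distribution of the point-based observable with the $\mu$-distribution of $A^\delta(\mathbf d,k)$ via the fact that multiply-coded points form a $\mathbb P$-null set---is exactly the bookkeeping needed, and your remark that no $O(1/k)$ slack is required here (in contrast to the large deviations reformulation) is apt.
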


\section*{Acknowledgments}

MT would like to thank the University of Manchester, where much of this work was done, for their hospitality.  He is also grateful to F. P\`ene for a useful conversation.  JMF thanks the University of St Andrews for their hospitality during several recent research visits involving this work.


\begin{thebibliography}{99}


 

\bibitem[A1]{assouadphd}
P. Assouad.
Espaces m{\'e}triques, plongements, facteurs,
{\em Th\`ese de doctorat d'\'Etat, Publ. Math. Orsay 223--7769, Univ. Paris XI, Orsay}, (1977).

\bibitem[A2]{assouad}
P. Assouad.
\'Etude d'une dimension m\'etrique li\'ee \`a la possibilit\'e de plongements dans $\mathbb{R}^n$,
{\em C. R. Acad. Sci. Paris S\'{e}r. A-B}, {\bf 288}, (1979), 731--734.


\bibitem[B]{baranski}
K.~Bara\'nski.
Hausdorff dimension of the limit sets of some planar geometric constructions,
{\em Adv. Math.}, {\bf 210}, (2007), 215--245.

\bibitem[Be]{bedford}
T. Bedford.
 Crinkly curves, Markov partitions and box dimensions in self-similar sets,
 {\em Ph.D dissertation, University of Warwick}, (1984).

\bibitem[Bi]{Bil12} P. Billingsley, {\em Probability and measure}, John Wiley, Anniversary edition, 2012. 

\bibitem[Bo]{Bow08} R. Bowen. \emph{Equilibrium states and the ergodic theory of Anosov diffeomorphisms,} volume 470 of Lecture Notes in Mathematics. Springer-Verlag, Berlin, revised edition, 2008. 

\bibitem[F1]{affine}
K.~J. Falconer.
 The Hausdorff dimension of self-affine fractals,
 {\em Math. Proc. Camb. Phil. Soc.}, {\bf 103}, (1988), 339--350.

\bibitem[F2]{falconer}
K.~J. Falconer.
{\em Fractal Geometry: Mathematical Foundations and Applications},
John Wiley, 3rd Ed., 2014.

\bibitem[FW]{feng}
D.-J. Feng and Y. Wang.
A class of self-affine sets and self-affine measures,
{\em J. Fourier Anal. Appl.}, {\bf11}, (2005), 107--124.

\bibitem[Fr1]{fraser}
J. M. Fraser.
On the packing dimension of box-like self-affine sets in the plane,
\emph{Nonlinearity}, {\bf25}, (2012), 2075--2092.


\bibitem[Fr2]{fraser_assouad}
J.~M. Fraser.
 Assouad type dimensions and homogeneity of fractals,
 {\em Trans. Amer. Math. Soc.}, {\bf 366}, (2014), 6687--6733.


\bibitem[FH]{sponges}
J.~M. Fraser and D. C. Howroyd.
 Assouad type dimensions for self-affine sponges,
 {\em preprint}, (2015), available at: \url{http://arxiv.org/abs/1508.03393}.

\bibitem[H]{heinonen}
J. Heinonen.
\emph{Lectures on Analysis on Metric Spaces}, Springer-Verlag, New York, 2001.

\bibitem[HV]{HuVai09} H. Hu and S. Vaienti. Absolutely continuous invariant measures for non-uniformly expanding maps, \emph{Ergodic Theory Dynam. Systems}, \textbf{29} (2009), 1185--1215.

\bibitem[GL]{lalley}
D. Gatzouras and S.~P. Lalley.
Hausdorff and box dimensions of certain self-affine fractals,
 {\em Indiana Univ. Math. J.}, {\bf 41}, (1992), 533--568.


\bibitem[KP]{kenyon}
R. Kenyon and Y. Peres.
Measures of full dimension on affine-invariant sets,
\emph{Ergodic Theory Dynam. Systems}, {\bf 16}, (1996), 307--323.

\bibitem[KV]{konyagin}
S. Konyagin and A. Volberg.
On measures with the doubling condition, \emph{Math. USSR-Izv.}, \textbf{30}, (1988), 629--638.

\bibitem[LS]{LepSau12}  
R. Leplaideur and B. Saussol.
Central limit theorem for dimension of Gibbs measures in hyperbolic dynamics,
\emph{Stoch. Dyn.}, \textbf{12}, (2012), 1150019.

\bibitem[LX]{quasiassouad}
F. L\"u and L.-F. Xi.
 Uniform disconnectedness and Quasi-Assouad Dimension,
 {\em preprint}, (2014), available at: \url{http://arxiv.org/abs/1409.2070}.




\bibitem[L]{luk}
J. Luukkainen.
Assouad dimension: antifractal metrization, porous sets, and homogeneous measures,
\emph{J. Korean Math. Soc.}, {\bf 35}, (1998), 23--76.

\bibitem[M]{mackay}
J. M. Mackay.
Assouad dimension of self-affine carpets,
\emph{Conform. Geom. Dyn.} {\bf 15}, (2011), 177--187.

\bibitem[Mc]{mcmullen}
C. McMullen.
 The Hausdorff dimension of general Sierpi\'nski carpets,
 {\em Nagoya Math. J.}, {\bf 96}, (1984), 1--9.




\bibitem[R]{robinson}
J. C. Robinson.
{\em Dimensions, Embeddings, and Attractors},
Cambridge University Press, 2011.



\end{thebibliography}
\end{document}